\DeclareMathOperator{\dom}{dom}
\newtheorem{lemma}{Lemma}[section]
\newtheorem{theorem}{Theorem}[section]
\newtheorem{definition}{Definition}[section]
\title{Priority Arguments and Epsilon Substitutions}
\author{Henry Towsner}
\date{\today}
\begin{document}
\bibliographystyle{amsalpha}
\maketitle

\section{Introduction}
While the priority argument has been one of the main techniques of recursion theory, it has seen only a few applications to other areas of mathematics \cite{martin,solovay}.  One possibility for another such application was pointed out by Kreisel: Hilbert's $\epsilon$-substitution method, a technique for proving the $1$-consistency of theories.  Kreisel's observation was that the proof that the method works \cite{Ackermann1940} bears a striking resemblence to the structure of a traditional finite injury priority argument.

Such a connection might have benefits for both fields.  The $\epsilon$-substitution method has powerful extensions \cite{AraiEpsilonID1,AraiEpsilonMahlo,AraiEpsilonWC} which might provide new tools for solving difficult recursion theoretic problems.  In the other direction, the most popular proof theoretic technique for proving $1$-consistency results, cut-elimination, has bogged down in technical details, and new ideas are neeed to make ordinal analytic results more accessible.

Unfortunately, Kreisel's observation has been difficult to turn into a concrete argument.  After Yang \cite{Yue}, the reason is clear: the success of all finite injury priority arguments is exactly enough to prove the $1$-consistency of the weak theory $I\Sigma_1$, and therefore finite injury arguments cannot be sufficient to prove the consistency of stronger theories.  Using a general framework for priority arguments developed by Lerman and Lempp \cite{ll1,ll2,ll3}, Yang goes on to show that arguments on the $n$-th level of their hierarchy of priority arguments are equivalent to the $1$-consistency of $I\Sigma_n$, and so it requires the full $\omega$ levels of that hierarchy to give $1$-consistency for all of first-order arithmetic.

The better known infinite injury and monster injury priority arguments belong to the second and third levels of this hierarchy, and, as the name ``monster'' suggests, going to higher levels becomes impractical without some kind of general framework.  The Lerman-Lempp framework is one of several that have been proposed \cite{ash1,ash2,knight,gslaman}.  One technique, usually described using ``workers on many levels,'' originally developed by Harrington, has been extended to hyperarithmetic levels.

We show in this paper that, if one is prepared to use one of these frameworks to describe the necessary priority argument, that the $\epsilon$-substitution method can be proven to work using a priority argument.  We follow Yang in using the Lerman-Lempp framework, although we know of no reason that other frameworks would not work just as well.

Currently, those few priority arguments that have been extended to hyperarithmetic levels have a fixed ordinal height $\alpha$.  This paper and Yang's suggest that this corresponds to the $1$-consistency of Peano Arithmetic plus transfinite induction up to a particular ordinal.  The $\epsilon$-substitution has difficult but reasonably well-understood extensions to systems like $ID_1$ \cite{Mints03,AraiEpsilonID1a}, a system which adds a least fixed point to arithmetic, and (less well-understood) extensions to even stronger systems \cite{AraiEpsilonMahlo,AraiEpsilonWC}.  We hope that these results can also be translated into the priority argument environment, giving a stronger recursion theoretic technique which might be capable of answering unsolved questions.

In the hopes of making the proof more accessible, we abandon the standard terminology of the $\epsilon$-substitution method for more conventional terminology.  We work in a quantifier-free language with Skolem functions: function symbols of the form $c_{\exists x. \phi[x,\vec y]}$ where $\phi$ is quantifer-free and $\vec y$ is a sequence of variables.  A term $c_{\exists x. \phi[x,\vec y]}(\vec t)$ is intended to represent a value $n$ such that $\phi[n,\vec t]$ holds, if there is such an $n$.  Note that we allow nesting, to represent $\Sigma_n$ formulas for arbitrary $n$; for instance terms like $c_{\exists x. \phi[c_{\exists y. \psi[x,y,\vec z]},\vec w]}$ are allowed.

An $\epsilon$-substitution is just a partial model for this language, providing an interpretation for the value of some Skolem functions when evaluated at some points.  Such partial models may not satisfy all axioms, but we will be interested in satisfying only finitely many axioms at a time.  When an $\epsilon$-substitution fails to satisfy some axiom, it will always be possible to repair this in a canonical way by extended the substitution.  The act of doing so, however, may force us to remove some other elements, since changing the value of one term may alter the interpretation of others.

The Lerman-Lempp framework for priority arguments uses a tower of trees, where the $n$-th tree controls conditions guiding $\Sigma_n$ properties.  We choose branches in the tree in stages, with each stage corresponding to a step in our construction.  The important idea is that the branches we choose stabilize enough to give a well-formed construction.  For instance, at the bottom level are $\Sigma_1$ properties; in our case, these form a tree where we can only change once: when we first reach a node with one of these conditions, unless our construction already witnesses the $\Sigma_1$ case, we assume a $\Pi_1$ outcome.  If at a later stage we discover a witness, we backtrack and choose a different branch, abandoning some of our progress through the tree.  But, having been witnessed, the $\Sigma_1$ outcome cannot change, so eventually we achieve a path through this tree.  The next tree controls $\Sigma_2$ properties, which can change back and forth repeatedly; the key to the proof will be that, relative to the first tree, the second tree has controlled backtracking: that is, except when we backtrack in the first tree, the second tree behaves like the first tree.  But since we can control the backtracking in the first tree, this gives an indirect control on the path we construct in the second tree.  This process is then repeated to give enough control on all the trees to prove that the construction we want is well-behaved.

Unlike a typical priority argument, our setting is finitary.  While this changes the phrasing of some arguments, the underlying concerns are the same: in a usual priority argument, we must arrange infinitely many conditions so that they have order type $\omega$, while in this case, we must arrange finitely many (where some appear multiple times) so that they eventually run out.  This proof could be modified to work with countably many conditions---for instance, all possible conditions---and therefore to prove that there is a recursively enumerable $\epsilon$-substitution assigning correct values to all rank $1$ Skolem functions (that is, all Skolem functions for $\Sigma_1$ formulas).

Rather than literally following the $H$-process, we prove termination of a modified process derived from our construction.  The primary difference is that in certain situations we add additional information to our $\epsilon$-substitution whose correctness is witnessed even if there is no axiom compelling us to do so.  This turns out to better match our construction since it means we can decide locally, by examining only the $\epsilon$-substitution, whether that information is present, rather than having to know what happened at previous stages to figure out whether it might have been added at some point.

\section{Skolem Functions and $\epsilon$-Substitutions}
In this section, we present a simplified version of the $\epsilon$-substitution method.  For the standard presentation, as well as those lemmas whose proofs we have omitted, see \cite{MintsTupailoBuchholz1996}.

We work in a Skolemized version of first-order arithmetic.
\begin{definition}
  Let $\mathcal{L}_{0}$ be the ordinary language of first-order arithmetic.  In particular, it contains predicate symbols for each primitive recursively definable relation, and the function symbols $0$ and $\mathbf{S}$ (and no others).

  Given a language $\mathcal{L}$ define the Skolemization $\mathcal{L}'$ by adding, for each $\Sigma_1$ formula $\exists x. \phi[x,y_1,\ldots,y_k]$ such that $\phi[x,0,\ldots,0]$ contains no closed Skolem terms and $y_1,\ldots,y_k$ includes all free variables besides $x$ in $\phi$, add a $k$-ary \emph{Skolem function} $c_{\exists x. \phi[x,y_1,\ldots,y_k]}$.

  Let $\mathcal{L}_{n+1}:=\mathcal{L}_{n}'$, and let $\mathcal{L}_{\omega}:=\bigcup\mathcal{L}_n$.  Let $\mathcal{L}\epsilon$ be the quantifier-free part of $\mathcal{L}_\omega$.

A formula or term $e$ has \emph{rank} $n$, written $rk(e)=n$, if it belongs to $\mathcal{L}_n$ but no $\mathcal{L}_m$ for $m<n$.
\end{definition}

Note that $\exists x. \phi[x,y_1,\ldots,y_n]$ may contain Skolem functions which depend on $x$.

\begin{definition}
  Within $\mathcal{L}\epsilon$, we take $\exists x. \phi[x,\vec t]$ to be an abbreviation for $\phi[c_{\exists x. \phi}(\vec t),\vec t]$ and $\forall x. \phi[x,\vec t]$ to be an abbreviation for $\phi[c_{\exists x. \neg\phi}(\vec t),\vec t]$.
\end{definition}

\begin{definition}
  The only rule of $PA\epsilon$ is modus ponens.  The axioms are:
\begin{enumerate}
\item All propositional tautologies of the language $\mathcal{L}\epsilon$
\item All substitution instances of the defining axioms for predicate constants
\item Equality axioms $t=t$ and $s=t\rightarrow\phi[s]\rightarrow\phi[t]$
\item Peano axioms $\neg \mathbf{S}t=0$ and $\mathbf{S}s=\mathbf{S}t\rightarrow s=t$
\item Critical formulas:
  \begin{itemize}
  \item $\phi[t]\rightarrow\exists x. \phi[x]$
  \item $\phi[0]\wedge\neg\phi[t]\rightarrow\exists x. (\phi[x]\wedge\neg\phi[\mathbf{S}x])$
  \item $\neg s= 0\rightarrow \exists x. s=\mathbf{S}x$
\end{itemize}
\end{enumerate}
\end{definition}

This is a standard axiomization of Peano Arithmetic, except that $\exists x. \phi[x]$ is an abbreviation for a statement about Skolem terms.  Note that all critical formulas have a general form $\phi\rightarrow\psi[c]$ for a Skolem term $c$; we will sometimes make reference to this general form for an arbitrary critical formula.

\begin{theorem}
  If there is a proof of a closed formula $\phi$ in $PA$ then there is a proof of $\phi$ in $PA\epsilon$ (where quantifiers are interpreted as abbreviations) containing only closed formulas.
\end{theorem}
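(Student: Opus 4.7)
The plan is to proceed by induction on the length of a $PA$-proof of $\phi$, translating each line into a sub-derivation in $PA\epsilon$, and then uniformly substituting closed terms to eliminate any remaining free variables. I would first fix a translation $\psi \mapsto \psi^*$ from $PA$-formulas to $\mathcal{L}\epsilon$, defined inside-out on quantifier depth: quantifier-free formulas are fixed, and $\exists x.\theta[x,\vec y]$ and $\forall x.\theta[x,\vec y]$ are sent to $\theta^*[c_{\exists x.\theta^*[x,\vec y]}(\vec y),\vec y]$ and $\theta^*[c_{\exists x.\neg\theta^*[x,\vec y]}(\vec y),\vec y]$ respectively, matching the abbreviation convention of the excerpt so that for any closed $PA$-formula $\phi$ the translation $\phi^*$ agrees with $\phi$ interpreted as an abbreviation. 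The translation preserves free variables, and each new Skolem function it introduces lives one level above the formula being Skolemized in the $\mathcal{L}_n$-hierarchy.

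The induction on the proof is then case by case. Propositional tautologies, equality axioms, the Peano axioms, and the defining axioms for primitive-recursive predicates translate to instances of the same axiom schemes in $PA\epsilon$, and modus ponens is preserved verbatim. The axiom $\theta[t] \to \exists x.\theta[x]$ translates to $\theta^*[t^*] \to \theta^*[c_{\exists x.\theta^*}(\vec y)]$, which is exactly a critical formula of the first kind; universal instantiation is dual, using that $\neg(\forall x.\theta)^*$ furnishes the relevant existential witness via the same critical formula. The eigenvariable rule---the Generalization rule of $PA$: from $\psi[y]$ conclude $\forall x.\psi[x]$ when $y$ is not free in any undischarged hypothesis---is handled by substituting $c_{\exists x.\neg\psi^*[x]}(\vec z)$ for $y$ throughout the translated sub-derivation of $\psi^*[y]$; since the $PA\epsilon$ axiom schemes are closed under substitution of terms for free variables, the substituted derivation is still a proof, now of $(\forall x.\psi[x])^*$. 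The induction rule of $PA$ follows from the second critical formula, which is precisely the least-counterexample principle and hence entails induction by the usual contrapositive argument.

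Having translated every step, the resulting derivation may still contain free variables; to enforce the closed-formula requirement, I would substitute $0$ for each remaining free variable uniformly throughout. Since every $PA\epsilon$ axiom scheme is closed under substitution of closed terms for free variables, and modus ponens commutes with substitution, the result is a $PA\epsilon$-derivation of $\phi$ in which every formula is closed.

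The main obstacle I anticipate is the bookkeeping around the well-formedness side condition on Skolem functions---the requirement that $\phi[x,0,\ldots,0]$ contain no closed Skolem subterms. I must verify both that the inside-out translation respects this (pruning vacuous quantifiers and, where necessary, factoring closed Skolem subterms out as additional arguments), and that the substitutions performed at the eigenvariable step and in the final closure step never collapse a Skolem term's arguments in a way that creates an illegal closed subterm. These are precisely the kind of auxiliary manipulations glossed over in the reference to \cite{MintsTupailoBuchholz1996}, but they are the only genuinely delicate part of the argument.
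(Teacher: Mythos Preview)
The paper does not actually prove this theorem: it is stated without proof, and the surrounding section explicitly refers the reader to \cite{MintsTupailoBuchholz1996} for ``those lemmas whose proofs we have omitted.'' There is therefore nothing in the paper to compare your argument against.

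That said, your outline is the standard route and is essentially correct. The inside-out translation, the case analysis on axioms and rules, the use of the first critical formula for the quantifier axioms, the least-counterexample critical formula for induction, the substitution of the Skolem witness to eliminate eigenvariables, and the final closure by substituting $0$ for leftover free variables are all exactly what the cited reference does. Your identification of the Skolem-function side condition (no closed Skolem subterms in $\phi[x,0,\ldots,0]$) as the only genuinely delicate bookkeeping point is also accurate; in the standard treatment this is handled by factoring any such closed subterm out as an additional displayed argument of the Skolem function, which your proposal already anticipates. One small omission: you do not mention the third critical formula $\neg s=0\rightarrow\exists x.\, s=\mathbf{S}x$, which is needed because $\mathcal{L}_0$ has no predecessor function symbol, so the usual arithmetic fact that nonzero numbers have predecessors must be supplied as an axiom on the $\epsilon$-side.
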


From here on, we assume that all formulas are closed (since in the Skolemized language there is no need for free variables).

\subsection{$\epsilon$-Substitutions}
We will be interested in particular partial models of formulas in $\mathcal{L}\epsilon$ assigning values to finitely many values of the Skolem functions.  We will only assign values to predicates of the form $c_{\exists x.\phi[x,\vec y]}(\vec t)$ where each $t_i$ is a natural number, and will assign either a natural number $u$ (asserting that $\phi[u,\vec t]$ holds) or a default value $?$ (leaving open the possibility that $\forall x. \neg\phi[x,\vec t]$).

\begin{definition}
  A \emph{canonical} term is a term of the form $c(\vec t)$ where $c$ is a Skolem function and each $t_i$ is a numeral.
\end{definition}
To keep some continuity with other work in the area, we call these models $\epsilon$-substitutions:
\begin{definition}
\item An \emph{$\epsilon$-substitution} is a function $S$ such that:
\begin{itemize}
\item The domain of $S$ is a set of canonical terms
\item If $e\in\operatorname{dom}(S)$ then $S(e)$ is either a numeral or the symbol $?$
\end{itemize}

An $\epsilon$-substitution is \emph{total} if its domain is the set of all canonical terms.
\end{definition}

We will frequently have a non-total $\epsilon$-substitution which we wish to take to be ``complete''
: that is, we wish to assign the default value to every canonical term not specifically assigned some other value.
\begin{definition}

The standard extension $\overline{S}$ of an $\epsilon$-substitution $S$ is given by
\[\overline{S}:=S\cup\{(e,?)\mid e\not\in \dom(S)\}\]
\end{definition}

\begin{definition}
  If $t$ is a term, we extend the function $S$ to define $\hat S(t)$ on arbitrary terms by induction on $t$, and also to sequences of terms:
  \begin{itemize}
  \item If $\vec s$ is the sequence $s_1,\ldots,s_k$, set $\hat S(\vec s):=\hat S(s_1),\ldots,\hat S(s_k)$
  \item If $t$ is a non-canonical Skolem term of the form $c(\vec s)$ and for some $i$, $\hat S(s_i)$ is not a numeral then $\hat S(t):=c(\hat S(\vec s))$
  \item If $t$ is a non-canonical Skolem term of the form $c(\vec s)$ and for every $i$, $\hat S(s_i)$ is a numeral then $\hat S(t):=\hat S(c(\hat S(\vec s)))$
  \item If $t$ is a canonical Skolem term in the domain of $S$ and $S(t)=?$ then $\hat S(t):=0$
  \item If $t$ is a canonical Skolem term in the domain of $S$ and $S(t)\in\mathbb{N}$ then $\hat S(t):=S(t)$
  \item If $t$ is a canonical Skolem term not in the domain of $S$ then $\hat S(t):=t$
  \item $\hat S(\mathbf{S}t):=\mathbf{S}\hat S(t)$
  \item $\hat S(0):= 0$
  \end{itemize}
\end{definition}

\begin{definition}
  \begin{itemize}
  \item If $\phi$ is an atomic formula $Rt_1\cdots t_n$ then $S\vDash Rt_1\cdots t_n$ iff $\hat S(t_i)$ is a numeral for each $i$ and $R\hat S(t_1)\cdots \hat S(t_n)$ holds in the standard model
  \item If $\phi$ is a negated atomic formula $\neg Rt_1\cdots t_n$ then $S\vDash \neg Rt_1\cdots t_n$ iff $\hat S(t_i)$ is a numeral for each $i$ and $\neg R\hat S(t_1)\cdots \hat S(t_n)$ holds in the standard model
  \item $S\vDash\phi\wedge\psi$ iff $S\vDash\phi$ and $S\vDash\psi$
  \item $S\vDash\neg(\phi\wedge\psi)$ iff $S\vDash\neg\phi$ or $S\vDash\neg\psi$
  \item $S\vDash\phi\vee\psi$ iff $S\vDash\phi$ or $S\vDash\psi$
  \item $S\vDash\neg(\phi\vee\psi)$ iff $S\vDash\neg\phi$ and $S\vDash\neg\psi$
  \end{itemize}
\end{definition}

The unusual handling of negation is necessary because if $S$ is not total, some formulas may be indeterminate.

\begin{definition}
  $S$ \emph{decides} $\phi$ if $S\vDash\phi$ or $S\vDash\neg\phi$.

  $unev(\phi,S)$, the set of terms in $\phi$ not evaluated by $S$, consists of terms of the form $c(\vec s)$ such that $\hat S(s_i)$ is a numeral for each $i$, but $\hat S(c(\vec s))$ is not a numeral.
\end{definition}

\begin{lemma}
  \begin{itemize}
  \item If $S\vDash\phi$ then $S\not\vDash\neg\phi$.
  \item If $S$ is total then $S$ decides all closed formulas.
  \item If $S$ does not decide a closed formula $\phi$ then $unev(\phi,S)$ is non-empty
  \end{itemize}
\end{lemma}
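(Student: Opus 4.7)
The plan is to prove all three items by structural induction on formulas, relying on a preliminary induction on terms for the last item. I will assume formulas are in negation normal form (so negations are pushed to atomic formulas), which matches the recursive clauses given for $\vDash$.

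For the first item, I induct on the structure of $\phi$. If $\phi$ is atomic, $Rt_1\cdots t_n$, then both $S\vDash\phi$ and $S\vDash\neg\phi$ require $\hat S(t_i)$ to be a numeral for each $i$, and then demand respectively that $R\hat S(t_1)\cdots\hat S(t_n)$ and $\neg R\hat S(t_1)\cdots\hat S(t_n)$ both hold in the standard model, which is impossible. For $\phi=\psi\wedge\chi$, $S\vDash\phi$ gives $S\vDash\psi$ and $S\vDash\chi$, whereas $S\vDash\neg\phi$ gives $S\vDash\neg\psi$ or $S\vDash\neg\chi$; either way the inductive hypothesis is violated. The $\vee$ case is symmetric.

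For the second item, I first show by induction on terms that if $S$ is total then $\hat S(t)$ is a numeral for every closed term $t$. The cases $\hat S(0)=0$ and $\hat S(\mathbf{S}t)=\mathbf{S}\hat S(t)$ are immediate. If $t=c(\vec s)$ then by induction each $\hat S(s_i)$ is a numeral, so $\hat S(t)=\hat S(c(\hat S(\vec s)))$; the inner term $c(\hat S(\vec s))$ is canonical and hence in $\dom(S)$, giving $\hat S(t)\in\{0\}\cup\{S(c(\hat S(\vec s)))\}\subseteq\mathbb{N}$. With this in hand, atomic closed formulas are decided (they reduce to a question about the standard model), and the propositional cases are handled by the inductive hypothesis applied to subformulas.

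The third item I prove contrapositively: if $unev(\phi,S)=\emptyset$ then $S$ decides $\phi$. The key lemma, proven by induction on terms, is that if $unev(\phi,S)$ contains no subterm of $t$ and $t$ occurs in $\phi$, then $\hat S(t)$ is a numeral. The interesting case is $t=c(\vec s)$: if some $\hat S(s_i)$ fails to be a numeral, the inductive hypothesis applied to $s_i$ already produces a witness in $unev(\phi,S)$; if all $\hat S(s_i)$ are numerals but $\hat S(t)$ is not, then $t$ itself is such a witness. Assuming emptiness of $unev(\phi,S)$, every term occurring in $\phi$ evaluates to a numeral, so atomic subformulas are decided exactly as in item two, and the propositional cases follow by the inductive hypothesis (noting that for $\psi\wedge\chi$, if neither conjunct is refuted by $S$ then both are forced by the IH to be satisfied, so the conjunction is satisfied).

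The main obstacle, minor but worth executing carefully, is the nested term induction for item three: one must keep track of the distinction between a term $t$ syntactically occurring in $\phi$ and the $\hat S$-normalized subterm $c(\hat S(\vec s))$ that may arise during evaluation, and verify that the offending subterm produced belongs to the original $\phi$ so as to lie in $unev(\phi,S)$.
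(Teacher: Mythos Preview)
The paper does not give its own proof of this lemma; it is one of the results whose proof is explicitly omitted and delegated to \cite{MintsTupailoBuchholz1996}. Your argument is the standard one and is correct: structural induction on formulas for all three items, together with an inner induction on terms for the second and third. The only point that deserves care, which you already flag, is the term induction in item three---you must check that the ``offending'' subterm you produce really is a syntactic subterm of $\phi$, so that it belongs to $unev(\phi,S)$ as defined. This holds because the induction descends through the syntactic subterm structure of terms occurring in $\phi$, never replacing $s_i$ by $\hat S(s_i)$ in the term you exhibit.
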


\begin{definition}
  \[S_{\leq r}:=\{(e,u)\in S\mid rk(e)\leq r\}\]
\end{definition}

\begin{lemma}
  If $S$ and $S'$ have the same domain and same values for Skolem functions of rank $\leq r$ (that is, $S_{\leq r}=S'_{\leq r}$) and $\phi$ contains only Skolem functions of rank $\leq r$ then $S\vDash\phi$ iff $S'\vDash\phi$.
\end{lemma}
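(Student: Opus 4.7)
The plan is to reduce the lemma to a purely term-level claim: for every closed term $t$ of rank $\leq r$, we have $\hat S(t)=\hat{S'}(t)$. Once this is established, the lemma follows by a routine induction on the structure of $\phi$: the atomic and negated atomic cases use that satisfaction of $R t_1\cdots t_n$ under $S$ (resp.\ $S'$) depends only on the values $\hat S(t_i)$ (resp.\ $\hat{S'}(t_i)$), and the Boolean cases are immediate from the recursive definition of $\vDash$.

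For the term claim, I would induct on the structure of $t$. The cases $t=0$ and $t=\mathbf{S}u$ are trivial or handled by the induction hypothesis. For a canonical Skolem term $t=c(\vec n)$ with $\vec n$ a sequence of numerals, the hypothesis $rk(t)\leq r$ means that $(t,u)\in S$ iff $(t,u)\in S_{\leq r}=S'_{\leq r}$ iff $(t,u)\in S'$; hence whichever of the three clauses (in domain with value $?$, in domain with numeric value, not in domain) applies to $S$ also applies to $S'$, and they yield the same output. For a non-canonical Skolem term $t=c(\vec s)$, I first note that every subterm $s_i$ lies in $\mathcal{L}_r$ and therefore has rank $\leq r$, since $t\in\mathcal{L}_r$ means that every function symbol occurring in $t$ belongs to $\mathcal{L}_r$. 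The induction hypothesis gives $\hat S(\vec s)=\hat{S'}(\vec s)$. If some $\hat S(s_i)$ is not a numeral, then the second clause of the definition of $\hat S$ applies symmetrically and both sides equal $c(\hat S(\vec s))$. Otherwise the third clause reduces both $\hat S(t)$ and $\hat{S'}(t)$ to the evaluation of $\hat S$ and $\hat{S'}$ on the canonical term $c(\hat S(\vec s))$, which still has rank $\leq r$; this term is strictly simpler in the sense that its arguments are numerals, so the canonical case handled above applies.

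The only subtlety worth flagging is that the defining recursion for $\hat S$ is not literally structural on $t$: the non-canonical case with all arguments evaluating to numerals invokes $\hat S$ on the canonical term $c(\hat S(\vec s))$, whose arguments need not be subterms of $t$. To keep the induction clean I would either measure terms by (depth of Skolem nesting, then subterm complexity) or first prove the canonical case outright and then treat non-canonical terms by a secondary induction on the subterms $\vec s$. Aside from this bookkeeping, the argument is purely a compositionality check; the key content is the observation that $c(\vec s)\in\mathcal{L}_r$ forces every subterm to lie in $\mathcal{L}_r$, which is what lets us transfer the hypothesis $S_{\leq r}=S'_{\leq r}$ downward through the recursive definition of $\hat S$.
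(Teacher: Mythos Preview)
Your argument is correct and is exactly the standard compositionality proof: reduce to agreement of $\hat S$ and $\hat{S'}$ on terms of rank $\leq r$, prove that by induction on term structure (with the bookkeeping you flag about the non-structural recursion), then induct on $\phi$. The paper itself omits the proof of this lemma, deferring to \cite{MintsTupailoBuchholz1996}, so there is nothing to compare against; your write-up is the expected one.
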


The purpose of $\epsilon$-substitutions is the following theorem:
\begin{theorem}
  Suppose that for every proof of a formula $\phi$ in $PA\epsilon$, there is an $\epsilon$-substitution $S$ such that $S\vDash\phi$.  Then Peano Arithmetic is $1$-consistent.
\end{theorem}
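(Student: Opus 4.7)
The plan is to show $1$-consistency by extracting a numerical witness directly from the satisfying $\epsilon$-substitution provided by the hypothesis. $1$-consistency of $PA$ is the statement that every $PA$-provable $\Sigma_1$ sentence is true in $\mathbb{N}$, so let $\sigma = \exists x_1 \cdots \exists x_k.\, R(x_1,\ldots,x_k)$ be such a sentence, where we may assume $R$ is a primitive recursive relation and hence is an atomic predicate of $\mathcal{L}_0$.

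First I would pass to the Skolemization of $\sigma$ inside $\mathcal{L}\epsilon$. Iterating the abbreviation $\exists x.\,\phi[x,\vec t] \equiv \phi[c_{\exists x.\phi}(\vec t),\vec t]$, the sentence $\sigma$ becomes a closed atomic formula $R(t_1,\ldots,t_k)$ in which each $t_i$ is a closed Skolem term: $t_1 = c_{\exists x_1 \cdots \exists x_k.\, R}$, $t_2 = c_{\exists x_2 \cdots \exists x_k.\, R[t_1,x_2,\ldots,x_k]}(t_1)$, and so on. By the earlier translation theorem, a $PA$-proof of $\sigma$ gives a $PA\epsilon$-proof of the closed formula $R(t_1,\ldots,t_k)$.

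Next I would invoke the hypothesis on that $PA\epsilon$-proof to obtain an $\epsilon$-substitution $S$ with $S \vDash R(t_1,\ldots,t_k)$. The atomic clause of the satisfaction definition then forces each $\hat S(t_i)$ to be an explicit numeral $n_i$ and forces $R(n_1,\ldots,n_k)$ to hold in the standard model. The tuple $(n_1,\ldots,n_k)$ therefore genuinely witnesses $\sigma$ in $\mathbb{N}$, so $\sigma$ is true.

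The only point requiring any care is unwinding the nested Skolem terms, and verifying that the witnesses read off from $\hat S$ really witness the original unSkolemized sentence; this is essentially a routine check and not a genuine obstacle. All of the substance of the theorem has been offloaded into its hypothesis---the existence, for every $PA\epsilon$-provable formula, of a satisfying $\epsilon$-substitution---which is precisely what the priority argument in the remainder of the paper is organized to deliver.
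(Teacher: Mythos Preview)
Your proposal is correct and follows essentially the same route as the paper: translate the $\Sigma_1$ sentence into its Skolemized $PA\epsilon$ form via the earlier translation theorem, apply the hypothesis to get a satisfying $S$, and read off numerical witnesses from $\hat S$ using the atomic clause of the satisfaction relation. The only cosmetic difference is that you explicitly unwind a block $\exists x_1\cdots\exists x_k$ of quantifiers, while the paper treats a single $\exists x.\phi[x,\vec t]$ with numeral parameters (harmless, since the language has a symbol for every primitive recursive relation, so a quantifier block can be contracted to one quantifier via pairing).
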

\begin{proof}
  If $PA\vdash\exists x.\phi[x, \vec t]$ then $PA\epsilon\vdash\phi[c_{\exists x. \phi}(\vec t),\vec t]$ where the terms in $\vec t$ are numerals.  By assumption, there is an $S$ such that $S\vDash\phi[c_{\exists x.\phi}(\vec t),\vec t]$, and therefore $\phi[S(c_{\exists x.\phi}(\vec t)),\vec t]$ is a true quantifier-free formula.
\end{proof}

Importantly, this theorem is provable in PRA: we will give a computable procedure for finding such a substitution.  First, we find simpler conditions under which $S\vDash\phi$ holds.
\begin{lemma}
  \begin{itemize}
  \item If $\phi$ is an axiom other than a critical formula then $S\vDash\phi$
  \item If $S\vDash\phi$ and $S\vDash\phi\rightarrow\psi$ then $S\vDash\psi$
  \end{itemize}
\end{lemma}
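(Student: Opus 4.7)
The modus ponens half is immediate from the preceding lemma: unfolding $\phi\to\psi$ as $\neg\phi\vee\psi$, the hypothesis $S\vDash\phi\to\psi$ gives $S\vDash\neg\phi$ or $S\vDash\psi$. By the first bullet of the preceding lemma, $S\vDash\phi$ rules out $S\vDash\neg\phi$, so $S\vDash\psi$.

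For the first bullet I would proceed class by class through the non-critical axioms. For the equality axiom $t=t$, $\hat S(t)$ is well-defined and equal to itself whenever it reduces to a numeral; for the substitution axiom $s=t\to\phi[s]\to\phi[t]$, if $S\vDash s=t$ then $\hat S(s)$ and $\hat S(t)$ are the same numeral, so $\phi[s]$ and $\phi[t]$ have identical $\hat S$-evaluations and the conditional is satisfied. For the Peano axioms, $\hat S(\mathbf{S}t)=\mathbf{S}\hat S(t)$ which can never equal $0$ in the standard model, and injectivity of $\mathbf{S}$ handles $\mathbf{S}s=\mathbf{S}t\to s=t$. The substitution instances of defining axioms for primitive recursive predicates hold because $\hat S$ reduces the relevant closed terms to numerals and the defining identity holds in the standard model. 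For propositional tautologies, the clauses for $\wedge$, $\vee$, and their negations in the definition of $\vDash$ are exactly the De Morgan rules, so once the atomic subformulas appearing in $\phi$ are decided satisfaction coincides with classical satisfaction and a tautology is validated.

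The main obstacle is that $S$ may fail to evaluate some terms, leaving certain atomic subformulas indeterminate; in such cases axioms like $t=t$ or $\neg\mathbf{S}t=0$ are not automatically satisfied by the literal three-valued semantics. The cleanest uniform route is to pass to the standard extension $\overline{S}$, which is total and so by the preceding lemma decides every closed formula, allowing purely classical reasoning; one then argues that agreement of $S$ and $\overline{S}$ on the relevant Skolem-function values transfers satisfaction back to $S$. For the Peano, equality, and defining axioms this transfer is transparent from the computation of $\hat S$, but carefully controlling exactly which atomic subformulas are decided for an arbitrary propositional tautology is where most of the attention is needed.
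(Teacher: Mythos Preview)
The paper omits this proof entirely, deferring to \cite{MintsTupailoBuchholz1996}, so there is no paper argument to compare against directly. Your treatment of modus ponens is correct and is the standard one.

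For the first bullet, you have correctly put your finger on the real difficulty: with the three-valued semantics given, a non-total $S$ need not satisfy $t=t$ (if $\hat S(t)$ is not a numeral, the atomic formula is simply undecided), nor $\neg\mathbf{S}t=0$, nor an instance of $p\vee\neg p$ built from such an atom. Where your proposal goes wrong is the remedy. Passing to $\overline{S}$ is fine, but the ``transfer back to $S$'' step does not exist: when $\hat S(t)$ fails to be a numeral there is nothing in the definition of $S\vDash$ that lets you conclude $S\vDash t=t$ from $\overline{S}\vDash t=t$, and indeed the conclusion is false. So your sentence ``For the Peano, equality, and defining axioms this transfer is transparent from the computation of $\hat S$'' is exactly the step that fails.

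The resolution is not a clever transfer argument but a reading of the hypothesis: in the reference the lemma is stated (and only true) for \emph{total} $S$, and in this paper every application is to $\overline{S}$ --- note that ``solving'' is defined via $\overline{S}\vDash Cr_I$ immediately after the lemma, and the reduction ``it suffices to satisfy each critical formula'' is meant for $\overline{S}$. Once $S$ is total, every closed term evaluates to a numeral, the semantics collapses to the classical two-valued one, and your case-by-case sketch goes through without obstacle.
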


Therefore to show that $S$ satisfies the conclusion of a proof, it suffices to show that $S$ satisfies each critical formula appearing in the proof.

\begin{definition}
  Let $e$ be a closed term.
  \begin{itemize}
  \item $\phi[[v,\vec t]]:=\phi[v,\vec t]\wedge\bigwedge_{u<v}\neg\phi[u,\vec t]$
  \item $\mathcal{F}(S):=\{\phi[[v,\vec t]]\mid(c_{\exists x.\phi(x)}(\vec t),v)\in S\wedge v\neq ?\}$
  \item $S$ is \emph{correct} if for any $\phi\in\mathcal{F}(S)$, $\overline{S}\vDash\phi$
  \end{itemize}
\end{definition}

$\phi[[v]]$ just states that $v$ is the smallest value where $\phi(x)$ holds.  A correct $\epsilon$-substitution ensures that whenever it assigns a numeral to some canonical term, it is assigning the minimal correct witness to the Skolem function.

From here on, let $Cr=\{Cr_0,\ldots,Cr_N\}$ be a fixed sequence of closed critical formulas.

\begin{definition}
We say $S$ is \emph{solving} if for each $I\leq N$, $\overline{S}\vDash Cr_I$.
\end{definition}

Let $S$ be a finite, correct, nonsolving $\epsilon$-substitution.  We will consider the critical formulas made false by $\overline{S}$ and select the first one of minimal rank to be fixed.  For $I=0,\ldots,N$, we define parameters needed for the $H$-process.

\begin{definition}
  If $Cr$ is a critical formula, we define the \emph{key term}, $e(Cr)$, the \emph{parameters} $t(Cr)$, and the \emph{reduced form} $red(Cr,S)$, by:
  \begin{itemize}
  \item If $Cr$ has the form $\phi[s,\vec t]\rightarrow\phi[c_{\exists x. \phi}(\vec t),\vec t]$ then $e(Cr):=c_{\exists x. \phi}$, $t(Cr):=\vec t$, and
$$red(Cr,S):=\phi[\hat S(s),\hat S(t_1),\ldots,\hat S(t_n)]\rightarrow\phi[c_{\exists x. \phi}(\hat S(t_1),\ldots,\hat S(t_n)),\hat S(t_1),\ldots,\hat S(t_n)]$$
\item If $Cr$ has the form $\phi[0,\vec t]\wedge \neg\phi[t',\vec t]\rightarrow\phi[c_{\exists x.\phi(x)\wedge\neg\phi(\mathbf{S}x)}(\vec t),\vec t]\wedge\neg\phi[c_{\exists x.\phi(x)\wedge\neg\phi(\mathbf{S}x)}(\vec t),\vec t]$ then $e(Cr):=c_{\exists x.\phi(x)\wedge\neg\phi(\mathbf{S}x)}$, $t(Cr):=\vec t$, and
\begin{align*}
red(Cr,S):=&\phi[0,\hat S(\vec t)]\wedge \neg\phi[\hat S(t'),\hat S(\vec t))]\rightarrow\\&\phi[c_{\exists x.\phi(x)\wedge\neg\phi(\mathbf{S}x)}(\hat S(\vec t)),\hat S(\vec t)]\wedge\neg\phi[\mathbf{S}c_{\exists x.\phi(x)\wedge\neg\phi(\mathbf{S}x)}(\hat S(\vec t)),\hat S(\vec t)]
\end{align*}
\item If $Cr$ has the form $\neg s=0\rightarrow s=\mathbf{S}(c_{\exists x. y=\mathbf{S}x}(s))$ then $e(Cr):=c_{\exists x. y=\mathbf{S}x}$, $t(Cr):=s$, and
$$red(Cr,S):=\neg \hat S(s)=0\rightarrow\hat S(s)=\mathbf{S}(c_{\exists x. y=\mathbf{S}x}(\hat S(s)))$$
  \end{itemize}

  We define $e(Cr,S):=e(Cr)(\hat S(t(Cr)))$.
\end{definition}

Note that if $S\vDash\neg Cr$ where $Cr$ has the form $\phi\rightarrow\psi[c]$ then there is a fixed $u$ such that for any correct $S'\supseteq S$ deciding each $\psi[v]$ for $v\leq u$, there is some $v\leq u$ such that $S'\vDash\psi[[v]]$.

\section{Finite Injury Relationships}
We present the key idea behind the construction we will later introduce, the finite injury relationship between two trees.  We are interested in a map $\lambda$ from a tree $T_1$ to a tree $T_2$ with the property that well-foundedness of $T_2$ will guarantee well-foundedness of $T_1$.  A particularly simple way to do this would be a ``zero injury'' relationship: if $x\subsetneq y$ in $T_1$ then $\lambda(x)\subsetneq \lambda(y)$ in $T_2$.  The finite injury relationship is more flexible; in addition to allowing $\lambda(x)$ to extend $\lambda(y)$, $\lambda(x)$ might ``correct'' some choice of branch in $\lambda(y)$, but in such a way that the choice made at each node may only be ``corrected'' finitely many times.  This ensures that eventually, the choice at each node stabilizes, so an infinite branch in $T_1$ would give rise to an infinite branch in $T_2$.

For our purposes, we use a simplified form, where branches are labeled with $\mathbb{N}\cup\{?\}$ and the only possible correction is from $?$ to a value in $\mathbb{N}$.

\begin{definition}
  Let $T_1,T_2$ be trees such that the branches of $T_2$ are labeled by $\mathbb{N}\cup\{?\}$, and let $\lambda:T_1\rightarrow T_2$ be given.  We say $\lambda$ is a \emph{finite injury relationship} if whenever $x\subsetneq y$, either $\lambda(x)\subsetneq \lambda(y)$ or there is an $\alpha^\frown\langle {?}\rangle\subseteq\lambda(x)$ such that $\alpha^\frown\langle u\rangle\subseteq\lambda(y)$ for some $u\in\mathbb{N}$.
\end{definition}

Note that if we take the underlying set of $T_2$ to be partially ordered by $u<{?}$ for all $u\neq {?}$, this is the same as saying that $\lambda$ is order-preserving from the extension ordering on $T_1$ to the Kleene-Brouwer ordering on $T_2$.

\begin{definition}
  We say $\lambda:T_1\rightarrow T_2$ is \emph{weakly finite injury} if whenever $x\subseteq y$ either $\lambda(x)\subseteq \lambda(y)$ or there is an $\alpha^\frown\langle {?}\rangle\subseteq\lambda(x)$ such that $\alpha^\frown\langle u\rangle\subseteq\lambda(y)$ for some $u\in\mathbb{N}$, and for every $x$ there is a maximum $n$ such that $x_0\subsetneq x_1\subsetneq \cdots \subsetneq x_n$ implies that $\lambda(x_0)\neq \lambda(x_n)$.
\end{definition}

This weakens the finite injury condition to allow finite runs where $\lambda$ is constant.

\begin{lemma}
  If $T_2$ is well-founded and $\lambda:T_1\rightarrow T_2$ is weakly finite injury then $T_1$ is well-founded.
\end{lemma}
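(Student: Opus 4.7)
The plan is to argue by contradiction, exploiting the Kleene--Brouwer reformulation the author has just highlighted. Suppose, toward a contradiction, that $x_0\subsetneq x_1\subsetneq x_2\subsetneq\cdots$ is an infinite strictly ascending chain in $T_1$; I want to extract from it an infinite branch of $T_2$ or a violation of the bounded-constancy clause of the weak-injury hypothesis.

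First I would recast the weak-injury hypothesis in order-theoretic terms. Linearly order the labels by $0<1<2<\cdots<{?}$ (which is isomorphic to $\omega+1$, hence a well-order) and equip $T_2$ with the corresponding Kleene--Brouwer order $<_{KB}$. Since the label-order is a well-order and $T_2$ is well-founded by hypothesis, $<_{KB}$ is a well-order on the nodes of $T_2$. The two clauses of the weak-injury condition now say exactly that whenever $x\subseteq y$ in $T_1$, $\lambda(y)\leq_{KB}\lambda(x)$: either $\lambda(x)\subseteq\lambda(y)$, in which case $\lambda(y)$ is an extension of $\lambda(x)$ and hence $\leq_{KB}\lambda(x)$, or $\lambda(x)$ passes through some $\alpha^\frown\langle ?\rangle$ while $\lambda(y)$ passes through $\alpha^\frown\langle u\rangle$ with $u\in\mathbb{N}$, and then at position $|\alpha|$ the inequality $u<{?}$ forces $\lambda(y)<_{KB}\lambda(x)$ strictly. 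In particular $\lambda(y)=\lambda(x)$ only when the two sequences are literally equal.

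Second, I would feed the assumed chain into this: $\lambda(x_0),\lambda(x_1),\lambda(x_2),\ldots$ is a $\leq_{KB}$-non-increasing sequence in the well-ordered set $(T_2,<_{KB})$, and so it is eventually constant; pick $N$ with $\lambda(x_i)=\lambda(x_N)$ for every $i\geq N$. Finally, the bounded-constancy clause of weak finite injury closes the argument: the tail $x_N\subsetneq x_{N+1}\subsetneq x_{N+2}\subsetneq\cdots$ gives, for every $m$, a strict chain $x_N\subsetneq\cdots\subsetneq x_{N+m}$ with $\lambda(x_N)=\lambda(x_{N+m})$, contradicting the finite bound attached to $x_N$.

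The only step that requires any genuine care is the first one: confirming that the nonstandard label-ordering $0<1<\cdots<{?}$ really does make Kleene--Brouwer well-founded on the well-founded tree $T_2$. This is the place where one has to check that, at every node of $T_2$, the (at most countably many) child-labels, drawn from $\mathbb{N}\cup\{?\}$ under our ordering, form a well-ordered set—which they do, since any subset of $\omega+1$ is a well-order. Once that is in hand, the rest is essentially the well-known fact that the Kleene--Brouwer order on a well-founded tree is a well-order, and the argument compresses to the few lines above.
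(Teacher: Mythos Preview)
Your argument is correct, but it takes a different route from the paper's own proof. The paper argues directly: assuming an infinite chain $x_1\subsetneq x_2\subsetneq\cdots$ in $T_1$, it builds an infinite branch $\mu$ of $T_2$ by showing, inductively on $m$, that some fixed $\mu\upharpoonright m$ is a prefix of $\lambda(x_j)$ for all sufficiently large $j$; the bounded-constancy clause is used to force $\lambda(x_j)$ to eventually grow past length $m$, and the injury dichotomy is used to show that once a non-$?$ label appears at position $m$ it can never change, while a $?$ can change at most once. You instead package the whole thing into the Kleene--Brouwer observation the paper made for the strict finite-injury case: with labels ordered as $0<1<\cdots<{?}\cong\omega+1$, the weak-injury dichotomy says exactly that $\lambda$ is weakly $\leq_{KB}$-decreasing along chains, and since $T_2$ is well-founded with well-ordered sibling sets, $<_{KB}$ is a well-order, so the image sequence must stabilize, contradicting bounded constancy. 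Your approach is shorter and more conceptual, and it makes transparent why the label convention $u<{?}$ is the right one; the paper's hands-on construction, on the other hand, avoids invoking the Kleene--Brouwer well-ordering theorem and makes explicit the limiting path in $T_2$, which is closer in spirit to the ordinal bookkeeping carried out later in Section~\ref{Ordinals}.
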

\begin{proof}
  Let $x_1\subsetneq x_2\subsetneq\cdots$ be an infinite branch in $T_1$.  Then we may inductively construct an infinite branch $\mu$ in $[T]$ such that for each $m$, there is some $i$ such that $\mu\upharpoonright m\subseteq\lambda(x_j)$ whenever $j\geq i$.

Suppose we have constructed $\gamma=\mu\upharpoonright m$, and let $i$ be such that $j\geq i$ implies $\gamma\subseteq\lambda(x_j)$.  Then, since $\gamma\subseteq\lambda(x_i)$ there is some $n>i$ such that $\gamma^\frown\langle u\rangle\subseteq \lambda(x_n)$ for some $u\in\mathbb{N}\cup\{?\}$.  If $u\neq{?}$ then it must be that $\gamma^\frown\langle u\rangle\subseteq\lambda(x_j)$ whenever $j>i$.

Otherwise, there are two possibilities.  Either $\gamma^\frown\langle {?}\rangle\subseteq\lambda(x_j)$ for all $j\geq n$, in which case we are done, or there is some $j>n$ such that $\gamma^\frown\langle u\rangle\subseteq\lambda(x_j)$, and therefore $\gamma^\frown\langle u\rangle\subseteq\lambda(x_k)$ whenever $k\geq j$.  In either case, we have constructed $\mu\upharpoonright m+1$.
\end{proof}

In Section \ref{Ordinals} we will show that when $\lambda$ is finite injury, an ordinal bound on the height of $T_2$ can be converted to a bound on the height of $T_1$.  (A similar argument could be made for $\lambda$ weakly finite injury, but would be made significantly more complicated by the need to handle runs where $\lambda$ is constant.)

\section{Examples of Priority Trees}
 First, we describe the general motivation behind our priority construction.  We are attempting a computation that depends on various parameters whose ``ideal'' value is non-recursive (specifically, the true values of Skolem functions).  Fortunately, we don't need to know the true value of these parameters, only values which suffice to satisfy certain conditions, the critical formulas appearing in the proof.

 Since the critical formulas contain parameters, which can themselves change in the course of our construction, a single critical formula may give rise to multiple conditions, as the values assigned to its parameters are changed.  The first step of the construction will be the process of unwinding critical formula with parameters to a tree of formulas without parameters.

 It is convenient to arrange conditions in a tree, where the nodes represent conditions and the branches representing the possible values that can be assigned to that condition.  In our case, the core conditions will turn out to be canonical Skolem terms, only some of which will be the key terms of a critical formula.  The others will be parameters needed to compute the correct way to satisfy critical formulas.

 Having built such a tree, we will proceed in stages.  At each stage we will proceed up from a node, choosing appropriate branches, until we reach a node associated with a critical formula.  At this point, we will stop and consider how to satisfy that critical formula.  We will then choose a branch, adding a Skolem term to our $\epsilon$-substitution.  This may invalidate previous choices, so we may have to backtrack; we will ensure that when we do so, we always backtrack to a node where we had chosen ${?}$, and instead choose an integer.  This ensures that our process is finite injury.

 When dealing with higher rank critical formulas---that is, questions whose ideal solution is $\Sigma_N$ for some potentially large $N$---we will have to use a tower of $N$ trees.  Roughly, the $n+1$-st tree will behave like a finite injury argument relative to the $n$-th tree: that is, as long as the $n$-th tree is simply accounting for information from the $n+1$-st, the $n+1$-st will behave in a finite injury way.  When the $n+1$-st tree reaches a level $n$-condition, the $n$-th tree may force us to throw out some information from the $n+1$-st tree and start that process over, causing the $n+1$-st tree to exhibit more complicated behavior.  So while it is difficult to describe the behavior of the $n+1$-st tree relative to the $n-1$-st directly (it is roughly that of an infinite injury argument), and essentially impossible to describe its behavior relative to the first level, we can describe each level's behavior as being finite injury relative to the previous level.

We first exhibit the simplified proof for the case where all Skolem terms have rank $1$, which substantially simplifies the process of computing a solving substitution from our construction.

\subsection{The Case of Rank $1$}
Suppose we have a set $Cr_1,\ldots,Cr_k$ of critical formulas such that $rk(e(Cr_i))=1$ for each $i\leq k$.  We first produce a tree $T_2$ branching over $\mathbb{N}\cup\{?\}$ and assign to each node $\alpha$ of height $i\leq k$ the formula $\mathit{form}(\alpha):=Cr_i$.

We fix an $\omega$-ordering $\prec$ of $T_2$ so that when $\alpha\subsetneq\beta$ then $\alpha\prec\beta$.  Next we construct another tree, $T_1$, also branching over $\mathbb{N}\cup\{?\}$.  We will assign to each node in $T_1$ other than the leaves either a critical formula whose key term is canonical or a canonical Skolem term.  Formally, for each non-leaf $\alpha\in T_1$, exactly one of $e(\alpha)$ and $\mathit{form}(\alpha)$ will be defined.  

 \begin{definition}
   Let $\alpha\in T_1$ be given.  Then $S(\alpha)$ is given inductively by:
   \begin{itemize}
   \item $S(\langle\rangle):=\emptyset$
   \item If $\mathit{form}(\alpha)$ is defined then $S(\alpha^\frown\langle u\rangle):=S(\alpha)\cup\{(e(\mathit{form}(\alpha)),u)\}$
   \item If $e(\alpha)$ is defined then $S(\alpha^\frown\langle u\rangle):=S(\alpha)\cup\{(e(\alpha),u)\}$
   \end{itemize}

   We say $\alpha$ in $T_1$ \emph{settles} a node $\beta\in T_2$ if $red(form(\beta),S(\alpha))$ contains no $\epsilon$-terms other than the key term, and $red(form(\beta),S(\alpha))$ belongs to the domain of $S(\alpha)$.
\end{definition}

Suppose we have assigned $e(\gamma)$ or $\mathit{form}(\gamma)$ to every $\gamma\subsetneq\alpha$.  Then let $\beta$ be the $\prec$-least node of $T_2$ such that $\alpha$ does not settle $\beta$.  If $red(\mathit{form}(\beta),S(\alpha))$ contains $\epsilon$-terms besides the key term, define $e(\alpha)$ to be a canonical $\epsilon$-term contained in $red(\mathit{form}(\beta),S(\alpha))$.  Otherwise, set $\mathit{form}(\alpha):=red(\mathit{form}(\beta),S(\alpha))$.

Given any node $\alpha\in T_1$, we define a path through $T_2$:
\begin{definition}
  \begin{itemize}
  \item $\langle\rangle\subseteq\lambda(\alpha)$
  \item If $\beta\subseteq\lambda(\alpha)$ and $\hat S(\alpha)(e(\mathit{form}(\beta)))$ is an integer $n$ then $\beta^\frown\langle n\rangle\subseteq\lambda(\alpha)$
  \end{itemize}
\end{definition}

Note that when $\beta\subseteq\alpha$, $\lambda(\beta)\subseteq\lambda(\alpha)$; moreover, the function cannot stabilize for infinitely long (that is, the $\lambda$ function is weakly finite injury, and furthermore, never requires the branching condition in weak finite injury).

Define a subtree $T_1'$ consisting of those nodes $\beta\in T_1$ such that for no $\gamma\subsetneq\beta$ is $\lambda(\gamma)$ a leaf in $T_2$.  That is, $\alpha$ is a leaf in $T_1'$ if it is the first node such that $\lambda(\alpha)$ is a leaf.  Then $T_1'$ is a well-founded tree.

Select a sequence of nodes through $T_1'$ as follows:
\begin{itemize}
\item $\alpha_0=\langle\rangle$
\item If $e(\alpha_n)$ is defined, set $\alpha_{n+1}:=\alpha_n^\frown\langle \overline{S(\alpha_n)}(e(\alpha_n))\rangle$
\item If $\mathit{form}(\alpha_n)$ is defined to be $\phi\rightarrow\psi[c]$ and $S(\alpha_n)\vDash\phi\rightarrow\psi[0]$ then $\alpha_{n+1}^\frown\langle \overline{S(\alpha_n)}(e(\mathit{form}(\alpha_n)))\rangle$
\item If $\mathit{form}(\alpha_n)$ is defined to be $\phi\rightarrow\psi[c]$ and $S(\alpha_n)\vDash\neg(\phi\rightarrow\psi[0])$ then there is some $n$ such that $S(\alpha_n)\vDash\psi[[n]]$.  If there is some $\gamma\subseteq\alpha_n$ such that $e(\gamma)=e(\mathit{form}(\alpha_n))$ then set $\beta:=\gamma$, otherwise set $\beta:=\alpha_n$.  Then set $\alpha_{n+1}:=\beta^\frown\langle n\rangle$
\end{itemize}

This is a finite injury process from the integers to $T_1'$, and therefore terminates at some node $\alpha$.  Observe that $S(\alpha)$ is correct and satisfies every critical formula along the path up to $\lambda(\alpha)$, and is therefore a solving substitution.

\section{The Main Construction}

\subsection{Trees}
Let $Cr_1,\ldots,Cr_K$ be a fixed sequence of critical formulas.  In the tree $T_{N+1}$, assign to each node $\alpha$ the critical formula $Cr_I$ where $I$ is the height of $\alpha$; denote this by $\mathit{form}(\alpha)$.

To each node other than leaves in the trees $T_1,\ldots,T_N$, we will assign either a canonical Skolem term of rank $\leq N$, which we will denote $e(\alpha)$, or a critical formula with canonical key term, which we will denote $\mathit{form}(\alpha)$.

To every node $\alpha$ in a tree $T_i$, $i\leq N$, we assign an $\epsilon$-substitution $S(\alpha)$:
\begin{definition}
  \begin{itemize}
  \item $S(\langle\rangle):=\emptyset$
  \item If $\mathit{form}(\alpha)$ is defined then $S(\alpha^\frown\langle u\rangle):=S(\alpha)\cup\{(e(\mathit{form}(\alpha)),u)\}$
  \item If $e(\alpha)$ is defined then $S(\alpha^\frown\langle u\rangle):=S(\alpha)\cup\{(e(\alpha),u)\}$
  \end{itemize}
\end{definition}

Suppose $T_{i+1}$ has been given.  Then fix a constructive $\omega$-ordering $\prec$ of $T_{i+1}$ with the property that if $\alpha\subsetneq\beta$ then $\alpha\prec\beta$.

\begin{definition}
We say a node $\alpha$ in $T_i$ \emph{settles} a node $\beta$ in $T_{i+1}$ if one of the following holds:
\begin{itemize}
\item $e(\beta)$ is defined, $rk(e(\beta))\leq i$, and there is some $\gamma^\frown\langle u\rangle\subseteq\alpha$ with $e(\gamma)=e(\beta)$
\item $e(\beta)$ is defined and $rk(e(\beta))>i$
\item $\mathit{form}(\beta)$ is defined, $rk(\mathit{form}(\beta))\leq i$, and there is some $\gamma^\frown\langle u\rangle\subseteq\alpha$ with $e(\mathit{form}(\gamma))=e(\mathit{form}(\beta))$ or $e(\gamma)=e(\mathit{form}(\beta))$
\item $\mathit{form}(\beta)$ is defined, $rk(\mathit{form}(\beta))>i$, $\mathit{form}(\beta)$ has the form $\phi\rightarrow\psi[c]$, and $S(\alpha)\vDash\phi\rightarrow\psi[0]$
\item $\mathit{form}(\beta)$ is defined, $rk(\mathit{form}(\beta))>i$, $\mathit{form}(\beta)$ has the form $\phi\rightarrow\psi[c]$, $S(\alpha)\vDash\neg(\phi\rightarrow\psi[0])$, and $S(\alpha)\vDash\psi[[n]]$ for some $n$
\end{itemize}
\end{definition}

Suppose we have assigned $\mathit{form}(\alpha)$ or $e(\alpha)$ for every $\alpha\subsetneq\beta$ in $T_i$.  Let $\beta$ be the $\prec$-least element of $T_{i+1}$ such that $\alpha$ does not settle $\beta$.  If $e(\beta)$ is defined then $rk(e(\beta))\leq i$, and we set $e(\alpha):=e(\beta)$.  Otherwise $\mathit{form}(\beta)$ is defined; if $\mathit{form}(\beta)\leq i$ and $red(\mathit{form}(\beta),S(\alpha))$ contains no $\epsilon$-terms besides the key term then set $\mathit{form}(\alpha):=red(\mathit{form}(\beta),S(\alpha))$.  Otherwise set $e(\alpha)$ to be a canonical $\epsilon$-term other than the key term appearing in $red(\mathit{form}(\beta),S(\alpha))$.  If $\mathit{form}(\beta)>i$ and $S(\alpha)$ does not decide $\phi\rightarrow\psi[0]$ then let $e(\alpha)$ be an element of $unev(\phi\rightarrow\psi[0],S(\alpha))$.  If $S(\alpha)\vDash\neg(\phi\rightarrow\psi[0])$ but there is no $n$ such that $S(\alpha)\vDash\psi[[n]]$ then let $n$ be least such that $S(\alpha)$ does not decide $\phi[n]$ and let $e(\alpha)$ be an element of $unev(\phi[n],S(\alpha))$.

Note that if $\alpha$ does not settle $\beta$, $\alpha^\frown\langle u\rangle$ may not settle $\beta$ either.  However it takes only finitely many extensions to settle $\beta$, and therefore along any path through $T_i$, every $\beta\in T_{i+1}$ is settled.

Also, note that the requirements about $red(\mathit{form}(\beta),S(\alpha))$ are necessary only when forming $T_{N}$: on lower trees $\mathit{form}(\beta)$ already contains no $\epsilon$-terms besides the key term.

\subsection{Building a Solving Substitution}
Now we describe the actual construction of a particular solving substitution, using the trees $T_1,\ldots,T_{N+1}$.

\begin{definition}
  For a node $\alpha\in T_i$, we define a sequence of nodes $\alpha^n_+$ in $T_{i+1}$ by recursion as follows.  $\alpha^0_+$ is $\langle\rangle$.  If $\alpha$ does not settle $\alpha^n_+$ or $\alpha^n_+$ is a leaf then the process terminates.  Otherwise we split into cases.

If $e:=e(\alpha^n_+)$ is defined and $rk(e)\leq i$ then $S(\alpha)(e)$ is defined and $\alpha^{n+1}_+:=\alpha^n_+{}^\frown\langle S(\alpha)(e)\rangle$.  If $rk(e)>i$ and there is some $u$ such that $S(\alpha)\vDash\phi[[u,\vec t]]$ where $e(\alpha)$ is $c_{\exists x. \phi[x,\vec y]}(\vec t)$ then $\alpha^{n+1}_+:=\alpha^n_+{}^\frown\langle u\rangle$, and if there is no such $u$, $\alpha^{n+1}_+:=\alpha^n_+{}^\frown\langle {?}\rangle$.

If $f:=\mathit{form}(\alpha^n_+)$ is defined and $rk(f)\leq i$ then $S(\alpha)(e(f))$ is defined and $\alpha^{n+1}_+:=\alpha^n_+{}^\frown\langle S(\alpha)(e(f))\rangle$.  Otherwise $rk(f)>i$, and either $S(\alpha)\vDash \phi\rightarrow\psi[c]$, in which case $\alpha^{n+1}_+:=\alpha^n_+{}^\frown\langle {?}\rangle$, or there is a $u$ such that $S(\alpha)\vDash\psi[[u]]$.  In this case, if there is some $\gamma\subseteq\alpha^n_+$ such that $e(\gamma)=e(\mathit{form}(\alpha^n_+),S(\alpha))$ then set $\beta:=\gamma$, otherwise set $\beta:=\alpha^n_+$.  Set $\alpha^{n+1}_+:=\beta^\frown\langle u\rangle$.
\end{definition}

Define $\delta_N:T_N\rightarrow T_{N+1}$ to be the final $\alpha^n_+$ in this process; note that this exists since there are no infinite paths through $T_{N+1}$ and the process is weakly finite injury, and therefore terminates.  Define $T_N'$ to be those nodes $\alpha\in T_N$ such that for no $\beta\subsetneq\alpha$ is $\delta_N(\beta)$ a leaf---that is, leaves in $T_N'$ are initial nodes $\alpha$ such that $\delta_N(\alpha)$ is a leaf.  This tree is well-founded since $T_{N+1}$ is and $\delta_N$ restricted to $T'_N$ is weakly finite injury.

We may iterate this process, defining subtress $T_i'\subseteq T_i$ and weakly finite injury maps $\delta_i:T_i'\rightarrow T_{i+1}'$.

Now choose a path through $T_1'$ as follows:
\begin{itemize}
\item Define $\alpha_0$ to be $\langle\rangle$
\item If $e(\alpha_n)$ is defined, set $\alpha_{n+1}:=\alpha_n^\frown\langle \overline{S(\alpha_n)}(e(\alpha_n))\rangle$
\item If $\mathit{form}(\alpha_n)$ is defined to be $\phi\rightarrow\psi[c]$ and $S(\alpha_n)\vDash\phi\rightarrow\psi[0]$ then $\alpha_{n+1}^\frown\langle \overline{S(\alpha_n)}(e(\mathit{form}(\alpha_n)))\rangle$
\item If $\mathit{form}(\alpha_n)$ is defined to be $\phi\rightarrow\psi[c]$ and $S(\alpha_n)\vDash\neg(\phi\rightarrow\psi[0])$ then there is some $n$ such that $S(\alpha_n)\vDash\psi[[n]]$.  If there is some $\gamma\subseteq\alpha_n$ such that $e(\gamma)=e(\mathit{form}(\alpha_n))$ then set $\beta:=\gamma$, otherwise set $\beta:=\alpha_n$.  Then set $\alpha_{n+1}:=\beta^\frown\langle n\rangle$
\end{itemize}

This process is a finite injury function from the natural numbers to $T_1'$, and therefore terminates after finitely many steps at some node $\alpha$.

\begin{lemma}
  $S(\alpha_n)$ is correct for each $n$.
\end{lemma}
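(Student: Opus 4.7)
The plan is to induct on $n$. The base case is vacuous, since $S(\alpha_0)=\emptyset$ makes $\mathcal{F}(S(\alpha_0))$ empty.

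For the inductive step, I lean on a structural observation about the bottom level of the tower. Because the $\alpha_n$ sequence walks $T_1'$, the settle clauses together with the construction of $T_1$ from $T_2$ force every label $e(\alpha)$ and every key term of every $\mathit{form}(\alpha)$ on $T_1$ to have rank exactly $1$. Hence each term in $\dom(S(\alpha_n))$ is a rank-$1$ canonical Skolem term $c_{\exists x.\phi(x)}(\vec t)$ with $\phi\in\mathcal{L}_0$ and $\vec t$ a tuple of numerals, so the associated member $\phi[[v,\vec t]]$ of $\mathcal{F}(S(\alpha_n))$ is a closed formula of rank $0$ containing no Skolem terms at all. Such a formula is decided identically by every $\epsilon$-substitution, since $\hat S$ is the identity on closed $\mathcal{L}_0$ terms (this also follows directly from the earlier lemma that substitutions agreeing on values of rank $\leq r$ agree on formulas of rank $\leq r$). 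In particular, $\overline{S}\vDash\phi[[v,\vec t]]$ is equivalent to truth of $\phi[[v,\vec t]]$ in the standard model, uniformly in $S$.

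With this observation the inductive step becomes a short case analysis on how $\alpha_{n+1}$ is built. In the first three bullets we append the label $\overline{S(\alpha_n)}(e)$ for some canonical $e$: either $e$ is already in $\dom(S(\alpha_n))$ so that $S(\alpha_{n+1})=S(\alpha_n)$, or it is not so that we append the default value and $\mathcal{F}(S(\alpha_{n+1}))=\mathcal{F}(S(\alpha_n))$; either way correctness transfers from the inductive hypothesis. In the fourth bullet we have $S(\alpha_{n+1})=S(\beta)\cup\{(c,m)\}$ with $c=c_{\exists x.\phi_*(x)}(\vec t)$ and $S(\alpha_n)\vDash\phi_*[[m,\vec t]]$ by hypothesis; by the rank-$0$ observation this last statement is just truth of $\phi_*[[m,\vec t]]$ in the standard model, so $\overline{S(\alpha_{n+1})}\vDash\phi_*[[m,\vec t]]$, and all the old members of $\mathcal{F}$ are likewise unaffected by the backtracking used to choose $\beta$, since their truth is standard-model truth. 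The only place any care is needed is in pinning down that the labels on $T_1$ really are rank $1$, which I do not expect to be a real obstacle: the branch $e(\alpha):=e(\beta)$ only fires when $rk(e(\beta))\leq 1$, and the $\mathit{form}(\alpha)$ branch reduces a rank-$1$ critical formula whose key term remains rank $1$.
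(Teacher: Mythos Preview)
Your approach is correct and is essentially the paper's argument made explicit. Both proceed by induction on $n$ and rely on the fact that every term entering $\dom(S(\alpha_n))$ along the $T_1$-path has rank $1$, so each member of $\mathcal{F}(S(\alpha_n))$ is a closed rank-$0$ formula whose truth is substitution-independent. The paper encodes this as ``$(S(\alpha_n))_{<rk(\epsilon x.\phi[x])}\subseteq S(\alpha_{n+1})$'', which in the present context is vacuous precisely because $rk(\epsilon x.\phi[x])=1$ forces the left-hand side to be empty; you spell out the rank-$1$ observation directly.

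One case is missing from your check that all $T_1$-labels have rank $1$. You cover $e(\alpha):=e(\beta)$ and $\mathit{form}(\alpha):=red(\mathit{form}(\beta),S(\alpha))$, but when the least unsettled $\beta\in T_2$ has $\mathit{form}(\beta)$ of rank $>1$, the construction instead sets $e(\alpha)$ to an element of $unev(\phi\rightarrow\psi[0],S(\alpha))$ or $unev(\psi[n],S(\alpha))$. To close this you need that $\mathit{form}$-labels on $T_2$ always have rank $\leq 2$, which is immediate from the construction rule for $T_2$ from $T_3$ (the clause ``if $\mathit{form}(\beta)\leq i$'' with $i=2$); then the key term has rank $\leq 2$, so $\phi\rightarrow\psi[0]$ and each $\psi[n]$ contain only rank-$\leq 1$ Skolem terms. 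This is exactly the routine check you anticipated, but it does require glancing one level up the tower.
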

\begin{proof}
    By induction on $n$.  $S(\alpha_0)$ is empty, and therefore correct.  If $(\epsilon x.\phi[x],u)\in S(\alpha_{n+1})$ with $u\neq{?}$ then either $(\epsilon x.\phi[x],u)\in S(\alpha_n)$ or $S(\alpha_n)\vDash\phi[[u]]$; in either case, since $(S(\alpha_n))_{<rk(\epsilon x.\phi[x])}\subseteq S(\alpha_{n+1})$ and $S(\alpha_n)$ is correct, $S(\alpha_{n+1})\vDash\phi[[u]]$.
\end{proof}

\begin{lemma}
  If $S(\gamma)$ is correct then $S(\gamma^n_+)$ is correct for all $n$.  In particular, $\delta_i(\gamma)$ is correct.
\end{lemma}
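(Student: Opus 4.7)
The plan is to prove the statement by induction on $n$, with the base case $n = 0$ immediate since $S(\gamma^0_+) = S(\langle\rangle) = \emptyset$ has no non-$?$ entries and is thus vacuously correct. For the inductive step, I would analyze the case distinction in the definition of $\gamma^{n+1}_+$. When the branch extending to $\gamma^{n+1}_+$ is labelled $?$, no new non-$?$ entry is added, so $\mathcal{F}(S(\gamma^{n+1}_+)) = \mathcal{F}(S(\gamma^n_+))$ and $\overline{S(\gamma^{n+1}_+)}$ agrees with $\overline{S(\gamma^n_+)}$ on every canonical term, so correctness is inherited from the inductive hypothesis.

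When the new branch label is a numeral $v$, a pair $(e, v)$ with $v \neq ?$ is adjoined to either $S(\gamma^n_+)$ (non-backtracking) or to $S(\beta)$ for some $\beta \subsetneq \gamma^n_+$ (backtracking). By the definition of the $\alpha^n_+$ process, $v$ is either $S(\gamma)(e)$ (when $rk(e) \leq i$, so $\overline{S(\gamma)} \vDash \phi[[v]]$ by correctness of $S(\gamma)$) or $v$ is a witness $u$ chosen so that $S(\gamma) \vDash \phi[[u]]$ directly. In either sub-case I have $\overline{S(\gamma)} \vDash \phi[[v]]$ where $\phi$ is the formula associated to $e$ and contains only Skolem functions of rank $\leq rk(e)-1$. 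To transfer this satisfaction to $\overline{S(\gamma^{n+1}_+)}$, I would invoke the earlier rank-agreement lemma at $r = rk(e)-1$, and I would verify correctness for each previously recorded non-$?$ entry in $S(\gamma^{n+1}_+)$ by a parallel application of the same lemma, comparing with $\overline{S(\gamma^n_+)}$ or $\overline{S(\beta)}$ as appropriate.

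The main obstacle will be establishing this rank-agreement cleanly. The structural fact I would exploit is that along the $\alpha^n_+$ path in $T_{i+1}$, every entry added at rank $\leq i$ is copied directly from $S(\gamma)$ by the first sub-case of the definition, while entries added at rank $> i$ are themselves witnesses for higher-rank formulas and so do not interfere with the evaluation of a formula of rank $\leq rk(e)-1$ whenever $rk(e) \leq i+1$. The backtracking case requires extra care, since $S(\gamma^{n+1}_+) = S(\beta) \cup \{(e,u)\}$ discards the entries of $S(\gamma^n_+)$ beyond $S(\beta)$; here I would use that $\beta$ is chosen to match the key term of $\mathit{form}(\gamma^n_+)$ at a previous occurrence, which forces the discarded intermediate entries to concern strictly higher-rank terms than the one currently being reassigned, so the restrictions $\overline{S(\beta) \cup \{(e,u)\}}_{\leq rk(e)-1}$ and $\overline{S(\gamma^n_+)}_{\leq rk(e)-1}$ coincide. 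Once the inductive claim is established, the concluding statement that $\delta_i(\gamma)$ is correct is immediate, since $\delta_i(\gamma)$ is by definition the final $\gamma^n_+$ in the terminating sequence.
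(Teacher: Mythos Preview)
Your proposal is correct and follows essentially the same approach as the paper: induction on $n$, with the inductive step showing that each non-$?$ entry of $S(\gamma^{n+1}_+)$ satisfies its correctness condition either by the inductive hypothesis (old entries) or because the value was selected so that $S(\gamma)\vDash\phi[[v]]$ (new entry), and then transferring via the rank restriction. The paper's own proof is the single line ``By induction on $n$. Essentially the same as the previous lemma,'' so your version is in fact a more detailed rendering of the same argument; the point you flag as ``the main obstacle'' (making the rank-agreement step go through, particularly in the backtracking case) is exactly the content the paper suppresses by the phrase ``essentially the same.''
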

\begin{proof}
  By induction on $n$.  Essentially the same as the previous lemma.
\end{proof}

Then in particular, let $\gamma:=\delta_{N-1}(\cdots \delta_1(\alpha)\cdots)$.  $S(\gamma)$ is correct, and therefore a solving substitution.

\section{Ordinal Analysis}
\label{Ordinals}

\begin{lemma}
  Suppose $f:T_1\rightarrow T_2$ is a finite injury relation, and $T_2$ has height $\alpha$.  Then there is a height function $o:T_1\rightarrow\omega^\alpha$ such that $x\subseteq y$ implies $o(y)<o(x)$.
\end{lemma}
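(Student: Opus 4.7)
The plan is to exploit the paper's own observation, made immediately after Definition~3.1, that the finite injury condition on $f$ is equivalent to $f$ being order-preserving from the extension order $\subsetneq$ on $T_1$ to the \emph{inverse} of the Kleene--Brouwer order $<_{KB}$ on $T_2$ (with the convention that $u<{?}$ for each $u\in\mathbb{N}$). Accordingly, it suffices to build a strictly monotone rank function $r\colon T_2\to\omega^\alpha$ with respect to $<_{KB}$: once we have this, setting $o(y):=r(f(y))$ immediately yields $o(y)<o(x)$ whenever $x\subsetneq y$ in $T_1$.

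I would construct $r$ by transfinite recursion on the height $\alpha$ of $T_2$. The base case $\alpha=0$ is trivial, since $T_2$ is then a single node and there are no nontrivial $<_{KB}$ comparisons. In the inductive step, write the children of the root of $T_2$ as $\{\beta_\ell : \ell\in L\}$ for some $L\subseteq\mathbb{N}\cup\{?\}$, and let $T_2^\ell$ denote the subtree rooted at $\beta_\ell$, which has height $\alpha_\ell<\alpha$. The induction hypothesis supplies $r^\ell\colon T_2^\ell\to\omega^{\alpha_\ell}$. Since in $<_{KB}$ the numeric subtrees $T_2^0, T_2^1,\ldots$ come first in order, followed by the $?$-subtree (if present), and finally the root itself, I assemble $r$ by ordinal-summing these blocks in sequence: for $\beta\in T_2^i$ with $i\in L\cap\mathbb{N}$, set
\[r(\beta):=\Bigl(\sum_{j\in L\cap\mathbb{N},\,j<i}\omega^{\alpha_j}\Bigr)+r^i(\beta),\]
then place the $?$-subtree and finally the root at appropriately larger offsets. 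Strict monotonicity in $<_{KB}$ is immediate from the construction, since the KB-earlier blocks receive strictly smaller offsets and within a single block the induction hypothesis controls the relative values.

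The main technical obstacle is verifying that the assembled rank remains strictly below $\omega^\alpha$. The delicate case is when $\alpha$ is a limit and the numeric children have heights $\alpha_i$ cofinal in $\alpha$: naively the sum $\sum_i\omega^{\alpha_i}$ may equal $\omega^\alpha$, leaving no room for the $?$-subtree or the root. The remedy is to exploit the Cantor normal form structure and closure properties of $\omega^\alpha$, and to carry a slightly strengthened induction hypothesis---for instance, insisting that each $r^\ell$ takes values strictly bounded by a named ordinal $<\omega^{\alpha_\ell}$, which keeps the finite sums and the necessary ``+1'' slack under control. With this bookkeeping the recursion closes and we obtain $r\colon T_2\to\omega^\alpha$ as required; taking $o:=r\circ f$ then yields the desired height function on $T_1$.
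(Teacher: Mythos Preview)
Your high-level reduction is exactly the one implicit in the paper: since $o$ is to factor through $f$, the task is to produce a rank $r$ on $T_2$ monotone for the (partial) Kleene--Brouwer order, and then set $o=r\circ f$. The paper, however, does not build $r$ by recursion on the height of $T_2$. It writes down an explicit closed formula
\[
r(s)\;=\;\Bigl(\sum_{a^\frown\langle ?\rangle\subseteq s}\omega^{h(a)}\Bigr)+\omega^{h(s)+1},
\]
using the \emph{global} height function $h:T_2\to\alpha$, and then checks the two finite-injury cases ($f(x)\subsetneq f(y)$, and a $?$-to-numeric switch) by direct Cantor-normal-form comparison. This sidesteps any induction on subtrees and, in particular, never confronts the limit difficulty you describe.

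Your recursive construction, by contrast, has a genuine gap at precisely the point you flag. Two issues compound it. First, you treat the numeric subtrees as totally ordered and therefore stack them with offsets $\sum_{j<i}\omega^{\alpha_j}$; but in the \emph{partial} KB order the paper uses (only $u<{?}$, distinct numerals incomparable), distinct numeric subtrees are pairwise incomparable, so no stacking is needed at all---they may all sit at offset $0$. Your stacking is exactly what forces the infinite sum that can reach $\omega^\alpha$. Second, even after removing the stacking, the proposed remedy (``carry a strengthened induction hypothesis bounding each $r^\ell$ strictly below some named ordinal $<\omega^{\alpha_\ell}$'') does not help: with infinitely many numeric children whose heights $\alpha_i$ are cofinal in a limit $\alpha$, the supremum of the $\omega^{\alpha_i}$ is still $\omega^\alpha$, and you still need room above it for the $?$-subtree and the root. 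The fix is not a sharper bound on the subtree ranks but rather to abandon the local subtree heights $\alpha_\ell$ in favour of the global $h$: the $?$-child of a node $a$ should receive offset $\omega^{h(a)}$, and the node $a$ itself should sit at $\omega^{h(a)+1}$. Unwinding that gives exactly the paper's closed formula, and the limit case evaporates because $h(a)<\alpha$ for every $a$ and only finitely many $?$-branches occur along any single path.
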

\begin{proof}
  Let $h:T_2\rightarrow\alpha$ be such that $s\subsetneq t$ implies $h(t)< h(s)$.  Then we define $o:T_1\rightarrow\omega^\alpha$ as follows:
$$o(x)=\left(\sum_{a\frown\langle {?}\rangle\subseteq f(x)}\omega^{h(a)}\right)+\omega^{h(x)+1}$$

We must show that this is order-preserving.  Let $y\subsetneq x$, and suppose $f(y)\subsetneq f(x)$.  Then we have $f(y)\frown\langle u\rangle\subseteq f(x)$; if $u={?}$ then
$$o(x)=\left(\sum_{a\frown\langle {?}\rangle\subseteq f(y)}\omega^{h(a)}\right)+\omega^{h(f(y))}+\left(\sum_{f(y)\frown\langle {?}\rangle\subseteq a\frown\langle {?}\rangle\subseteq f(x)}\omega^{h(a)}\right)+\omega^{h(f(x))+1}$$

So it suffices to show that
$$\omega^{h(f(y))}+\left(\sum_{f(y)\frown\langle {?}\rangle\subsetneq a\frown\langle {?}\rangle\subseteq f(x)}\omega^{h(a)}\right)+\omega^{h(f(x))+1}<\omega^{h(y)+1}$$
But this is clear, since $h(f(x))+1\leq h(f(y))<h(f(y))+1$ and $h(a)<h(f(y))$ whenever $f(y)\frown\langle {?}\rangle\subseteq a$.

If $u\neq {?}$ then this is even simpler, since the $\omega^{h(f(y))}$ term is omitted.

Now suppose that $b\frown\langle {?}\rangle\subseteq f(y)$ and $a\frown\langle u\rangle\subseteq f(x)$.  Then
$$o(y)=\beta+\omega^{h(f(y))}+\gamma$$
for suitable $\gamma<\omega^{h(f(y))}<\beta$, and
$$o(x)=\beta+\delta$$
where $\delta<\omega^{h(f(y))}$.  Therefore $o(x)<o(y)$.
\end{proof}

\bibliography{PriPABib}
\end{document}